\documentclass[12pt]{article}
\usepackage[latin1]{inputenc}
\usepackage{vector}
\usepackage{amsmath}
\usepackage{enumerate}
\usepackage{amsfonts}
\usepackage{amssymb}
\usepackage{hyperref}

\newcommand{\qed}{\hfill $\Box$ \vspace{\baselineskip}}

\numberwithin{equation}{section}

\newtheorem{teo}{Theorem}[section]

\newtheorem{df}[teo]{Definition}

\newtheorem{cor}[teo]{Corollary}

\newtheorem{lem}[teo]{Lemma}

\newtheorem{conj}[teo]{Conjecture}

\def\no{\noindent}

\def\sbs{\subset}
\def\es{\emptyset}

\def\td{\tilde}
\def\b{\beta}
\def\a{\alpha}
\def\d{\delta}
\def\ba{\begin{array}}
\def\ea{\end{array}}
\def\nn{\nonumber}
\def\rd{{\rm d}}

\def\t{\theta}

\def\Ra{\Rightarrow}

\def\P{\mathsf{P}}
\def\E{\mathsf{E}}

\def\R{\mathbb{R}}

\def\n{\{0,1\}^N}
\def\nn{[N]}

\begin{document}

\begin{center}
{\huge AI's solution of Conjecture 9.1.}
\end{center}

\medskip

\begin{center}Jinyoung Park\footnote{New York University, US, jinyoungpark@nyu.edu} and
 Michel Talagrand\footnote {Paris, France, michel.talagrand@gmail.com}
\end{center}

\vspace{1cm}

\begin{abstract} In an earlier paper, {M. Talagrand} made a number of daring conjectures. An  AI model provided a very beautiful proof of one of them  which  {we} explain here. 
\end{abstract}

\section{Introduction}

Let us denote by $\nn$ the set of subsets of $\{{1, 2},\ldots, N\}$, which we identify with $\n$. For $0<p<1$ we consider the probability $\mu_p=((1-p)\d_0+p\d_1)^{\otimes N}$ on $\nn=\n$. A main idea of \cite{small} is  {that} if a set $A\sbs \nn$ is not too small, say $\mu_p(A)\geq 1/2$, the set $A^{(2)}$ of elements  {of} $\nn$ which {\bf cannot} be covered by two elements of $A$ should be small.

Let  us recall some notions introduced in  \cite{small}. For an element $I\in \nn$ we denote $H_I=\{J\in \nn; I\sbs J\}$. Thus $\mu_p(H_I)=p^{|I|}$, where $|I|$ denotes the cardinal of $I$. 
\begin{df} A subset $B$ of $\n$ is called $p$-small if $B\sbs \cup_\ell H_{I_\ell}$ where $\sum_\ell p^{|I_\ell|}\leq 1/2.$
\end{df}

A main conjecture of \cite{small} is as follows.
\begin{conj}\lab{conj1}
There exists $\a>0$ such that $A^{(2)}$ is $(\a p)$-small whenever $\mu_p(A)\geq 3/4.$
\end{conj}
This conjecture is still open, although one must say that combining the results of \cite{li} (or the later results explained  {in} this note) and those  {of} \cite{p}  {shows that} the remaining gap is only of order $\log\log(100/p)$.
 A weaker notion of smallness was introduced in \cite{small}.
 \begin{df}Given $\rho>0$ a probability $\nu$ on $\nn$ is $\rho$-spread if $\nu(H_I)\leq \rho^{|I|}$ for each $I\in \nn$.  
 \end{df}
 \begin{df}A subset $B$ of $\nn$ is weakly $\rho$-small if it does not support a $\rho$-spread probability measure. \end{df}
 
 It should be obvious that a $p$-small set is weakly $p$-small. The following daring conjecture of \cite{small} is still open. 
 \begin{conj} There exists  {$\a>0$} such that a  {weakly $p$-small} set is $(\a p)$-small. 
 \end{conj} 
 This conjecture is still open, although it is shown  {in} \cite{p} that the remaining gap is only of order $\log\log(100/p)$. 

  Chen Li \cite{li} recently guided an AI agent to obtain a proof of the weaker version of Conjecture \ref{conj1} where $(\a p)$-small is replaced by  {weakly $p$-small}, so that  \cite{p} show that only a small $\log\log$ gap remains to be closed to  prove Conjecture \ref{conj1}.
 
 \section{Projections} 
 If a subset $A$ of $\nn$ satisfies $\mu_{1/2}(A)> {1/2}$ there is $J\in A$ such that its complement $J^c$ is also in $A$ and thus $A^{(2)}=\es$. Consequently if for $J\in \nn$ the projection of $A$ on $\{0,1\}^J$ has a measure $>1/2$ for the uniform measure, then $J\not \in A^{(2)}$. More generally we define
 \be\lab{9}\t_{ {\a},J}(A)=\mu_\a(\{ L\in \nn; L\cap J\in A\} {)},\ee
 so that $\t_{1/2,J}(A)>1/2\Ra J\not \in A^{(2)}$. Hence the idea that to show that $A^{(2)}$ is small it suffices to show that the function $\t_{1/2,J}(A)$ of $J$ is not too small. 
 
 Throughout the following, we assume that the set $A$ is  {nonempty and} hereditary, that is $I\sbs J\in A\Ra I\in A$.  The following is a strong version of Conjecture 9.1 of \cite{small}. It was obtained by an  {A.I agent\footnote{GPT-6 Astra}} which had been fed the paper \cite{li} before. 
 \begin{teo}\lab{th1} Consider  {$0<\a<1$} and $\rho=p(1-\a)/((1-p)\a)$. Then if the measure $\nu$ is $\rho$-spread, 
 \be\lab{1}\log \mu_p(A)\leq \int \log \t_{\a, J}(A) \rd \nu (J).\ee
 \end{teo}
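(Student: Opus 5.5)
The plan is to linearize both sides by the Gibbs variational principle (relative entropy) and then compare entropies, with the $\rho$-spread condition on $\nu$ supplying the needed averaging inequality. Let $q$ be the law of a random set $X$ distributed as $\mu_p$ conditioned on $A$. Then $\log\mu_p(A)=-D(q\,\|\,\mu_p)$, and for every distribution $q'$ supported on a set $S$ we have $\log\mu_\a(S)\ge -D(q'\,\|\,\mu_\a)$.

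\textbf{Step 1 (a test measure for each $J$).} For each $J$ I define a random $L$ by taking $L\cap J=X\cap J$ and letting the coordinates of $L$ outside $J$ be independent Bernoulli$(\a)$. Since $A$ is hereditary and $X\in A$, we get $L\cap J\in A$. Hence
\[
\log\t_{\a,J}(A)\ \ge\ -D\bigl(\mathrm{law}(X_J)\,\|\,\mu_\a^{J}\bigr),
\]
because the coordinates outside $J$ contribute nothing to the divergence. It therefore suffices to prove
\be\lab{plan1}
\int D\bigl(\mathrm{law}(X_J)\,\|\,\mu_\a^J\bigr)\,\rd\nu(J)\ \le\ D(q\,\|\,\mu_p).
\ee

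\textbf{Step 2 (expanding both sides).} Write $q_i=\P(X_i=1)$ and $\nu_i=\nu(\{J;\,i\in J\})$. Then
\[
D(q\|\mu_p)=-H(X)-\sum_i\bigl(q_i\log p+(1-q_i)\log(1-p)\bigr),
\]
and similarly the left side of \eqref{plan1} is
\[
-\int H(X_J)\,\rd\nu(J)-\sum_i\nu_i\bigl(q_i\log\a+(1-q_i)\log(1-\a)\bigr).
\]
The choice $\rho=p(1-\a)/((1-p)\a)$ is exactly the odds ratio $\frac{p}{1-p}\big/\frac{\a}{1-\a}$. So after regrouping, the linear terms should combine into terms of the form $q_i\log\rho$ plus terms proportional to $\log(1-p)$ and $\log(1-\a)$. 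The target is then an inequality between $\int H(X_J)\,\rd\nu$ and $H(X)$, corrected by the weights $\sum_i q_i\log(1/\rho)$.

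\textbf{Step 3 (the main obstacle).} We need a lower bound for $\int H(X_J)\,\rd\nu(J)$ in terms of $H(X)$. First I would use the chain rule in a fixed order, $H(X_J)\ge\sum_{i\in J}H(X_i\mid X_{<i})$, which follows since conditioning on more reduces entropy. This is too weak alone, because $\nu_i\le\rho$ only controls first moments. The hereditary structure must be used. I would exploit that $q$ is $\mu_p$ conditioned on a down-set, so $X$ is negatively biased: on the event $X_i=1$ the remaining coordinates are "more constrained". The entropy $H(X)$ should then be bounded by a sum over the sets $I\subset X$ of contributions controlled by $\sum_I\nu(H_I)\le\sum_I\rho^{|I|}$, in the spirit of the spread-versus-cover duality from \cite{li}.

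If the entropic route stalls, I would switch to induction on $N$. Split $A$ into $A_0=\{I\in A;\,N\notin I\}$ and $A_1=\{I;\,I\cup\{N\}\in A\}\sbs A_0$, so that $\mu_p(A)=(1-p)\mu_p(A_0)+p\,\mu_p(A_1)$. For $J\ni N$, $\t_{\a,J}(A)=(1-\a)\t(A_0)+\a\,\t(A_1)$, with $\t$ computed on $J\setminus\{N\}$. The part of $\nu$ on $\{J\ni N\}$ is spread only up to a factor $\rho/\nu_N$, so the induction hypothesis should be strengthened to measures with $\nu(H_I)\le c\,\rho^{|I|}$. The two-point inequality combining $(1-p,p)$ with $(1-\a,\a)$ is then where the exact value of $\rho$ enters. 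I expect this two-point inequality, and choosing the right strengthened hypothesis, to be the crux.
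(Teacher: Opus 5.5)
There is a genuine gap, and it sits in the reduction at the end of Step 1. The inequality you reduce to, $\int D(\mathrm{law}(X_J)\,\|\,\mu_\a^J)\,\rd\nu(J)\le D(q\,\|\,\mu_p)$, is false, so Steps 2 and 3 aim at an impossible target.

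Take $A=\{0,1\}^N$. Then $q=\mu_p$ and the right side is $0$, as it must be, since the theorem reads $0\le 0$ here. But $X_J$ is a product of Bernoulli$(p)$ variables, so the left side is $d(p\|\a)\int|J|\,\rd\nu(J)$, with $d(p\|\a)=p\log\frac{p}{\a}+(1-p)\log\frac{1-p}{1-\a}>0$ for $p\neq\a$. For $\nu=\mu_\rho$, which is $\rho$-spread when $\rho\le 1$ (e.g. the key case $\a=1/2$, $p<1/2$), this equals $N\rho\,d(p\|\a)>0$. The same failure occurs for any $A$ with $\mu_p(A)$ close to $1$, so no use of heredity or negative correlation in Step 3 can rescue it.

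The loss comes from your test measure. The Gibbs bound $\log\mu_\a(S)\ge -D(q'\|\mu_\a)$ is sharp only for $q'$ near $\mu_\a(\cdot\mid S)$, whose coordinates in $J$ have density about $\a$. Your $q'$ keeps the $J$-marginal of $\mu_p(\cdot\mid A)$, which for large $A$ is close to $\mu_p^J$. You are therefore charged about $d(p\|\a)$ per coordinate of $J$ even when $\t_{\a,J}(A)=1$. Fixing this would need a $J$-dependent tilt of the test measure plus a new comparison inequality, neither of which the proposal supplies. Your fallback induction on $N$ leaves exactly the hard part (the strengthened hypothesis and the two-point inequality) open, so it is not a proof either.

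The missing idea is a second-moment argument rather than an entropy comparison.

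\emph{Setup.} Expand ${\sf 1}_A=\sum_I a_I\chi_I$ in the $p$-biased orthonormal basis and let $F$ be its multilinear extension to $\R^N$. For fixed $J$, evaluate $F$ at a random point $Z_J$ with $Z_i=p$ for $i\notin J$. For $i\in J$, set $Z_i=1$ with probability $\a$ and $Z_i=(p-\a)/(1-\a)$ otherwise; negative values are harmless. Then $\E F(Z_J)=\mu_p(A)$ and $\E F(Z_J)^2=\sum_{I\sbs J}a_I^2\b^{|I|}$ with $\b=1/\rho$.

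\emph{Vanishing from heredity.} If $I=\{i\in J; Z_i=1\}\notin A$, then $F(Z_J)=0$. Indeed, fixing the coordinates in $I$ to $1$ leaves a multilinear function of the other coordinates. It vanishes on $\{0,1\}^{I^c}$, because those points contain $I$ and hence lie outside $A$, so it vanishes identically. Thus $\P(F(Z_J)\neq 0)\le\t_{\a,J}(A)$, and Cauchy--Schwarz gives $\mu_p(A)^2\le\t_{\a,J}(A)\sum_{I\sbs J}a_I^2\b^{|I|}$.

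\emph{Averaging.} Take logarithms, integrate against $\nu$ and apply Jensen. The spread hypothesis then enters at every level $I$, not just through the first moments $\nu_i$: $\int\sum_{I\sbs J}a_I^2\b^{|I|}\,\rd\nu(J)=\sum_I a_I^2\b^{|I|}\nu(H_I)\le\sum_I a_I^2=\mu_p(A)$. This yields the theorem.
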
 
 \begin{cor}[Essentially \cite{li}] If $\mu_p(A)>1/2$ then $A^{(2)}$ is  {weakly  $p/(1-p)$-small}, and hence   {weakly $p$-small}.
 \end{cor}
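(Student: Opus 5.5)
The plan is to apply Theorem \ref{th1} with $\a=1/2$ and argue by contradiction. With this choice $\rho=p(1-\a)/((1-p)\a)=p/(1-p)$, so Theorem \ref{th1} applies exactly to $p/(1-p)$-spread measures.

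First I would reduce to the hereditary case. The theorem is stated for $A$ nonempty and hereditary. $A$ is nonempty because $\mu_p(A)>1/2$. If $A$ is not hereditary, replace it by its down-closure $\bar A=\{I;\ I\sbs J \text{ for some } J\in A\}$. Then $\mu_p(\bar A)\geq\mu_p(A)>1/2$. Also $\bar A^{(2)}=A^{(2)}$: if $J\sbs I_1\cup I_2$ with $I_1\sbs I_1'\in A$ and $I_2\sbs I_2'\in A$, then $J\sbs I_1'\cup I_2'$. (I read ``covered'' as $J\sbs I_1\cup I_2$.) So we may assume $A$ is hereditary.

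Next I would make precise the implication $\t_{1/2,J}(A)>1/2\Ra J\not\in A^{(2)}$ recalled in Section 2. Under $\mu_{1/2}$, the map $L\mapsto J\setminus L$ preserves the law of $L\cap J$. Hence the events $\{L\cap J\in A\}$ and $\{(J\setminus L)\in A\}$ both have probability $\t_{1/2,J}(A)>1/2$, so they intersect. A point $L$ in the intersection gives two elements $L\cap J$ and $J\setminus L$ of $A$ whose union is $J$. Therefore every $J\in A^{(2)}$ satisfies $\t_{1/2,J}(A)\leq 1/2$.

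Now suppose $A^{(2)}$ supports a $p/(1-p)$-spread probability $\nu$. By Theorem \ref{th1} with $\a=1/2$,
$$\log(1/2)<\log\mu_p(A)\leq\int\log\t_{1/2,J}(A)\,\rd\nu(J)\leq\log(1/2),$$
where the last inequality holds because $\nu$ lives on $A^{(2)}$, on which $\t_{1/2,J}(A)\leq 1/2$. This is a contradiction, so $A^{(2)}$ is weakly $p/(1-p)$-small.

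Finally, since $p\leq p/(1-p)$, any $p$-spread measure is also $p/(1-p)$-spread. Hence $A^{(2)}$ supports no $p$-spread measure either, i.e. it is weakly $p$-small.

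There is no real obstacle, since all the work is in Theorem \ref{th1}. The only points needing care are the hereditary reduction and the symmetry argument in the second step.
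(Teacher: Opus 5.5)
Your proof is correct and is essentially the paper's argument. You set $\alpha=1/2$ so that $\rho=p/(1-p)$, and use the theorem's inequality to show that a $p/(1-p)$-spread $\nu$ must give mass to some $J$ with $\theta_{1/2,J}(A)>1/2$, hence $J\notin A^{(2)}$; your bound of the integral by $\log(1/2)$ is the same step. The extra details you supply are correct and make explicit what the paper takes for granted: the reduction to hereditary $A$ via the down-closure, the complementation argument for $\theta_{1/2,J}(A)>1/2\Rightarrow J\notin A^{(2)}$, and the inclusion of $p$-spread measures among $p/(1-p)$-spread ones.
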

 {\bf Proof.} Note that for $\a=1/2$ {,} $\rho= p/(1-p)$. If $\nu$ is a $p/(1-p)$-spread measure, then \rref{1} shows that there is $J$ in the support of $\nu$ such that $\t_{1/2, J}(A)>1/2$ and as we have seen we have $J\not \in A^{(2)}$ so that $A^{(2)}$  {does not support}
 a $p/(1-p)  $-spread measure. \qed
 
Let us also observe that the  {concavity} of the $\log$ implies the weaker inequality
\be\lab{10}\mu_p(A)\leq   \int  \t_{\a, J}(A) \rd \nu (J).\ee

A  few days ago,   X. Fang and T. Wang \cite{fw} {guided}
an AI agent which, after being fed Li's paper \cite{li}, produced a direct proof of  \rref{10}.

 \section{Proofs}
 We consider the function $\chi$ of a real variable $x$ given by 
 $$\chi(x)=\frac{x-p}{\sqrt{p( {1}-p)}}.$$
 Given $z=(z_i)_{i\leq N}\in \R^N$ and $I\in \nn$ we define $\chi_I(z)=1$ if $I=\es$ and $\chi_I(z)=\prod_{i\in I}\chi(z_i)$ otherwise. The functions $\chi_I$ form an orthonormal basis of $L_2(\n, \mu_p)$. We consider the  {expansion} 
 $${\sf 1}_A=\sum_I a_I \chi_I, $$
 so that 
 $$a_\es=\mu_p(A)  ; \quad \sum_I a_I^2=\mu_p(A).$$
We consider the function 
$$F(z)=\sum_I a_I\chi_I(z)$$
on $\R^N$. The following should be obvious. 
\begin{lem}Consider independent r.v.s $(Z_i)_{i\leq N}$ and assume that for each $i$ we have $\E Z_i=p$. Then
\be\lab{2} \E F(Z)=a_\es=\mu_p(A)\ee
\be\lab{3} \E F(Z)^2=\sum_I a_I^2\prod_{i\in I}\E \chi(Z_i)^2.\ee
\end{lem}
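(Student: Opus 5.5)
The plan is to expand $F(Z)$ and $F(Z)^2$ as sums over $I$ and use that $F$ is a multilinear polynomial in which each variable appears with degree at most one in every monomial $\chi_I$. Independence of the $Z_i$ then factorizes every expectation into one-dimensional ones.

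First, for (\ref{2}): by linearity, $\E F(Z)=\sum_I a_I \E\chi_I(Z)$. For $I\neq\es$, independence gives
$$\E \chi_I(Z)=\prod_{i\in I}\E\chi(Z_i),$$
and since $\chi$ is affine, $\E\chi(Z_i)=\chi(\E Z_i)=\chi(p)=0$. Hence only the term $I=\es$ survives. It equals $a_\es$, which was already identified as $\mu_p(A)$ by taking the inner product of ${\sf 1}_A$ with $\chi_\es=1$ in $L_2(\mu_p)$.

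Second, for (\ref{3}): expand
$$F(Z)^2=\sum_{I,I'}a_Ia_{I'}\chi_I(Z)\chi_{I'}(Z).$$
For fixed $I,I'$, group the factors by coordinate. Coordinates in $I\cap I'$ contribute $\chi(Z_i)^2$, and coordinates in the symmetric difference $I\triangle I'$ contribute $\chi(Z_i)$. By independence,
$$\E\chi_I(Z)\chi_{I'}(Z)=\prod_{i\in I\cap I'}\E\chi(Z_i)^2\prod_{i\in I\triangle I'}\E\chi(Z_i).$$
The second product vanishes unless $I=I'$, since each factor is $\chi(p)=0$. The diagonal terms give exactly $\sum_I a_I^2\prod_{i\in I}\E\chi(Z_i)^2$.

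Since $N$ is finite, all sums are finite, so exchanging $\E$ with the sums needs no justification beyond integrability of the $Z_i$. This requires $\E Z_i^2<\infty$ for (\ref{3}) to make sense, which is implicit in the statement (the right side of (\ref{3}) involves $\E\chi(Z_i)^2$). There is no real obstacle. The only points needing care are that $\chi$ is affine, so that $\E\chi(Z_i)=0$ follows from $\E Z_i=p$ alone, with no requirement that $Z_i$ be $\{0,1\}$-valued, and the bookkeeping of the cross terms in the square.
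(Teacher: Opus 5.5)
Your proof is correct and is exactly the routine verification the paper leaves implicit when it says the lemma \emph{should be obvious}. By independence each expectation factorizes over coordinates, and $\mathsf{E}\chi(Z_i)=0$ because $\chi$ is affine and $\mathsf{E}Z_i=p$, so only $I=\emptyset$ survives in the first identity and only the diagonal terms $I=I'$ survive in the second. Your caveat that the second identity needs $\mathsf{E}Z_i^2<\infty$ is appropriate, and it holds automatically for the bounded two-valued $Z_i$ to which the paper applies the lemma.
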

Let us now fix a set $J\in \nn$. For $i\not \in J$ we set $Z_i=p$. For $i\in J$, $Z_i$ takes the value 1 with probability $\a$ and the value $(p-\a)/(1-\a)$ with probability $1-\a$. Thus $\E Z_i=p$ and a straightforward computation yields
$$\E \chi(Z_i)^2=\b:=\frac{\a(1-p)}{(1-\a)p}  {\quad (i \in J)}.$$
Writing $Z_J=(Z_i)_{i\leq N}$ we then get from \rref{2} and \rref{3} that 
\be\lab{4} \E F(Z_J)=\mu_p(A),\ee
\be \lab{5} \E F(Z_J)^2=\sum_{I\sbs J} {a_I^2} \b^{|I|}.\ee
We then use the Cauchy-Schwarz inequality in the form 
\be  {\bigl(\E F(Z_J)\bigr)^2}\leq \P ( {F(Z_J)}\not=0) \E F(Z_J)^2.\ee
 The extraordinarily clever part of the proof is the following observation:
 \begin{lem}\lab{l32}We have $\P( {F(Z_J)}\not =0)\leq \t_{\a, J}(A).$
 \end{lem}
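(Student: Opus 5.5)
The plan is to identify $F$ with the multilinear extension of ${\sf 1}_A$. Once that is done, it suffices to show that $F(Z_J)$ vanishes identically on an event whose complement has probability exactly $\t_{\a,J}(A)$. Heredity of $A$ is what makes this work.

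\emph{Step 1: $F$ is the multilinear extension of ${\sf 1}_A$.} Each $\chi_I$ is a polynomial that is affine in each variable separately, so $F$ is a multilinear polynomial on $\R^N$. Since the $\chi_I$ form a basis of $L_2(\n,\mu_p)$ and $\mu_p$ charges every point of $\n$, $F$ coincides with ${\sf 1}_A$ on $\n$. The polynomial
$$G(z)=\sum_{K\in A}\prod_{i\in K} z_i\prod_{i\not\in K}(1-z_i)$$
is also multilinear and agrees with ${\sf 1}_A$ on $\n$. A multilinear polynomial is determined by its values on the cube, so $F=G$ on all of $\R^N$. This holds even at points with negative coordinates; this matters because $(p-\a)/(1-\a)$ is negative when $\a>p$.

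\emph{Step 2: the random set.} Let $L=\{i\in J;\ Z_i=1\}$. The events $\{Z_i=1\}$, $i\in J$, are independent, each of probability $\a$, and $L\sbs J$. Hence $L$ has the law of $L'\cap J$ with $L'$ distributed as $\mu_\a$, and
$$\P(L\in A)=\mu_\a(\{L'\in\nn;\ L'\cap J\in A\})=\t_{\a,J}(A).$$

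\emph{Step 3: $L\not\in A\Ra F(Z_J)=0$.} Evaluate $G(Z_J)$ term by term. For $i\in L$ we have $1-Z_i=0$, so any term with $K\not\supset L$ contains a factor $1-Z_i=0$ and vanishes. The surviving terms have $K\in A$ with $K\supset L$. Since $A$ is hereditary, such a $K$ forces $L\in A$. So if $L\not\in A$ there are no surviving terms and $F(Z_J)=G(Z_J)=0$. Therefore $\{F(Z_J)\neq 0\}\sbs\{L\in A\}$, and by Step 2 this gives $\P(F(Z_J)\neq 0)\leq \t_{\a,J}(A)$.

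The main obstacle is Step 1: one must see that the Fourier expansion in the $\chi_I$ is the same polynomial as the product-form expansion, and that the identity survives outside $[0,1]^N$. Once $F$ is written as $G$, Step 3 is immediate from heredity.
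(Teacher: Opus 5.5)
Your proof is correct and follows essentially the paper's argument. It uses the same random set $L=\{i\in J;\ Z_i=1\}$, the same implication $L\notin A\Rightarrow F(Z_J)=0$, and the same key fact that a multilinear function on $\mathbb{R}^N$ is determined by its values on $\{0,1\}^N$. The difference is minor: you apply that fact globally to identify $F$ with the explicit interpolation polynomial $G$ and read off the vanishing term by term. The paper instead freezes $z_i=1$ for $i\in L$ and applies the uniqueness principle to the restricted function of the remaining variables, which vanishes on the subcube $\{0,1\}^{L^c}$ by heredity.
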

 {\bf Proof of  {Theorem~\ref{th1}}.} We have obtained that 
 $$\mu_p(A)^2\leq \t_{\a, J}(A) \sum_{I\sbs J}a_I^2\b^{|I|},$$
 so that taking logarithms
 \be\lab{6}2\log \mu_p(A)\leq \log \t_{\a, J}(A)+\log \sum_{I \sbs J}a_I^2\b^{|I|}.\ee
 
 Consider then a $\rho$-spread measure, with $\rho\b\leq 1$. Thus 
 $$\int \Bigl( \sum_{I\sbs J}a_I^2 \b^{|I|}\Bigr)\rd \nu(J)\leq \sum_I a_I^2(\b \rho)^{|I|}\leq \sum_I a_I^2=\mu_p(A).$$
  Thus, integrating \rref{6} with respect to $\rd \nu$ and using the concavity of the $\log$ yields 
  $$2\log \mu_p(A)\leq \int \log \t_{\a,J}(A) \rd \nu(J)+\log \mu_p(A),$$
  which is the required inequality. \qed 
  
 \no {\bf Proof of Lemma \ref{l32}.} Consider the random set $I=\{i\in J; Z_i=1\}$. We will prove that 
 \be\lab{7} I\not \in A \Ra F(Z_J)=0.\ee
 Indeed, when this holds it follows  that $F(Z_J)\not=0\Ra I\in A$, and comparison with \rref{9} shows that the probability of this  {latter} event is exactly $\t_{\a,J}(A)$. 
 
  A function on $\R^N$ of the type $\sum_I b_I\chi_I(z)$ is entirely determined by the values it takes for $z\in \{0,1\}^N$ because the $b_I$ are the Fourier coefficients of this function seen as an element of  $L^2(\{0,1\}^N, \mu_p)$. In particular if the function is zero for each $z \in \{0,1\}^N$ it is identically zero. Let us then consider the function $F(z)$ when we fix $z_i=1$ for $i\in I$ as a  {function $F'(z')$} of the other variables $z'\in \R^{I^c}$. It is of the type  {$\sum_{K \subset I^c} b_K \chi_K(z')$}. When $I\not \in A$ we want to prove that this function is identically zero, and for this it suffices to prove that it is zero when the components of $z'$ take the values 0 or 1. But  then $z\in \n$ and $F(z)={\sf 1}_A(z)=0$ because $A$ is hereditary so that $z\not \in A$ because $I\not \in A$ and $I \sbs z$.\qed

  \section*{Acknowledgment} The proofs presented in this document were generated entirely by the AI model GPT-6 Astra, and none of the authors claim any credit for them. This write-up is written solely as a service to the research community and is not intended for journal publication.

\end{document}